\newcommand{\bburl}[1]{\textcolor{blue}{\url{#1}}}
\newcommand{\suppress}[1]{}
\newcommand\be{\begin{equation}}
\newcommand\ee{\end{equation}}
\newcommand\bea{\begin{eqnarray}}
\newcommand\eea{\end{eqnarray}}
\newcommand{\wepsilon}{\widetilde{\epsilon}}
\newtheorem{thm}{Theorem}[section]
\newtheorem{cor}[thm]{Corollary}
\newtheorem{rek}[thm]{Remark}
\begin{document}

\title{Irrationality measure and lower bounds for $\pi(x)$}

\author{David Burt}
\email{\textcolor{blue}{\href{mailto:drb3@williams.edu}{drb3@williams.edu}}}
\address{Department of Mathematics and Statistics, Williams College, Williamstown, MA 01267}

\author[Donow]{Sam Donow}
\email{\textcolor{blue}{\href{mailto:sad3@williams.edu}{sad3@williams.edu}}}
\address{Department of Mathematics and Statistics, Williams College, Williamstown, MA 01267}

\author[Miller]{Steven J. Miller}
\email{\textcolor{blue}{\href{mailto:sjm1@williams.edu}{sjm1@williams.edu}}, \textcolor{blue}{\href{Steven.Miller.MC.96@aya.yale.edu}{Steven.Miller.MC.96@aya.yale.edu}}}
\address{Department of Mathematics and Statistics, Williams College, Williamstown, MA 01267}

\author[Schiffman]{Matthew Schiffman}
\email{\textcolor{blue}{\href{mailto:schiffman.matt@gmail.com}{schiffman.matt@gmail.com}}}
\address{University of Chicago Booth School of Business, Chicago, Illinois 60637}

\author[Wieland]{Ben Wieland}
\email{\textcolor{blue}{\href{mailto:bwieland@gmail.com}{bwieland@gmail.com}}}
\address{Department of Mathematics, Brown University, Providence, RI 02912}

\begin{abstract}
In this note we show how the irrationality measure of $\zeta(s) = \pi^2/6$ can be used to obtain explicit lower bounds for $\pi(x)$. We analyze the key ingredients of the proof of the finiteness of the irrationality measure, and show how to obtain good lower bounds for $\pi(x)$ from these arguments as well. While versions of some of the results here have been done by other authors, our arguments are more elementary and yield a lower bound of order $x/\log x$ as a natural boundary.
\end{abstract}

\subjclass[2010]{11N05 (primary), 11N56, 11J82 (secondary).}

\keywords{Irrationality measure, $\pi(x)$, $\zeta(2)$.}

\date{\today}

\thanks{We thank Emmanuel Kowalski, Tanguy Rivoal and Jonathan Sondow for helpful comments and suggestions on an earlier draft. The third named author was partly supported by NSF grants DMS0600848 and DMS1265673.}

\maketitle

\setcounter{equation}{0}



\section{Introduction}\setcounter{equation}{0}

One of the most important functions in number theory is $\pi(x)$, the number of primes at most $x$. Many of the proofs of the infinitude of primes fall naturally into one of two categories. First, there are those proofs which provide a lower bound for $\pi(x)$. A classic example of this is Chebyshev's proof (see \cite{Da,MT-B}) that there is a constant $c$ such that $c x / \log x \le \pi(x)$. Another method of proof is to deduce a contradiction from assuming there are only finitely many primes. One of the nicest such arguments is due to Furstenberg (see Chapter 1 of \cite{AZ}), who gives a topological proof of the infinitude of primes. As is often the case with arguments along these lines, we obtain no information about how rapidly $\pi(x)$ grows.

Sometimes proofs which at first appear to belong to one category in fact belong to another. For example, Euclid proved there are infinitely many primes by noting the following: if not, and if $p_1, \dots, p_N$ is a complete enumeration, then either $p_1\cdots p_N+1$ is prime or else it is divisible by a prime not in our list. A little thought shows this proof belongs to the first class, as it yields there are at least $k$ primes at most $2^{2^k}$, thus $\pi(x)\ge \log_2 \log_2 (x)$.

For the other direction, we examine a standard `special value' proof; see \cite{MT-B} for proofs of all the claims below. Consider the Riemann zeta function \be \zeta(s)\ :=\ \sum_{n=1}^{\infty} \frac1{n^s} \ = \ \prod_{p\ {\rm prime}} \left(1 - p^{-s}\right)^{-1}, \ \ee which converges for $\Re s > 1$; the product representation follows from the unique factorization properties of the integers. One can show $\zeta(2) = \pi^2/6$. As $\pi^2$ is irrational, there must be infinitely many primes; if not, the product over primes at $s=2$ would be rational. While at first this argument may appear to belong to the second class (proving $\pi(x)$ tends to infinity without an estimate of its growth), the purpose of this note is to show that it belongs to the first class, and we will obtain an explicit, though \emph{very} weak, lower bound for $\pi(x)$ for all $x$. We deliberately do not attempt to obtain the optimal bounds attainable through this method, but rather concentrate on proving the easiest possible results which best highlight the idea. We conclude by showing how our weak estimates can be fed back into the argument to obtain (infinitely often) massive improvement over the original bounds; our best results here are almost as good as the estimates from Euclid's argument.


Our lower bounds for $\pi(x)$ use the fact that the irrationality measure of $\pi^2/6$ is bounded. An upper bound on the irrationality measure of an irrational $\alpha$ is a number $ u$ such that there are only finitely many pairs $p$ and $q$ with \be \left| \alpha - \frac{p}{q}\right| \ < \ \frac1{q^{ u}}. \ee The irrationality measure $\mu_{\rm irr}(\alpha)$ is defined to be the infimum of the bounds and need not itself be a bound. Liouville constructed transcendental numbers by studying numbers with infinite irrationality measure, and Roth proved the irrationality measure of an algebraic number is $2$. Currently the best known bound for $\zeta(2)$ is due to Rhin and Viola \cite{RV2}, who give 5.45 as a bound on its irrationality measure. Unfortunately, the published proofs of these bounds use good upper and lower bounds for $d_n := {\rm lcm}(1,\dots,n)$. These upper and lower bounds are obtained by appealing to the Prime Number Theorem (or Chebyshev type bounds); this is a problem for us, as we are trying to prove a weaker version of the Prime Number Theorem (which we are thus subtly assuming in one of our steps!).\footnote{For another example along these lines, see Kowalski \cite{K}. He proves $\pi(x) \gg \log\log x$ by combining the irrationality measure bounds of $\zeta(2)$ with deep results on the distribution of the least prime in arithmetic progressions. Our goal here is to see how far elementary methods can be pushed; in particular, we are trying to see how far one can get without using input about the distribution of primes in progressions. See also \cite{S}, where Sondow proves that $p_{n+1} \le (p_1\cdots p_n)^{2 \mu_{\rm irr}(1/\zeta(2))}$.}

In the arguments below we first examine consequences of the finiteness of the irrationality measure of $\pi^2/6$, deriving lower bounds for $\pi(x)$ in \S\ref{sec:lowerboundspix}. Our best result is Theorem \ref{thm:theorem3}, where we show $\mu_{\rm irr}(\pi^2/6) < \infty$ implies that there is an $M$ such that $\pi(x)\ge \frac{\log\log x}{2\log\log\log x} - M$ infinitely often. We conclude in \S\ref{sec:boundsirrmeasure} by describing how we may modify the standard irrationality measure proofs to yield weaker irrationality bounds which do not require stronger input on $d_n$ then we are assuming. Theorems \ref{thm:theorem2} and \ref{thm:theorem3} are unconditional (explicitly, we may remove the assumption that the irrationality measure of $\pi^2/6$ is finite through a slightly more involved argument). Theorem \ref{thm:theorem4} requires results from Rhin and Viola's \cite{RV2} proof of the irrationality measure, though it only needs weaker results that are independent of the Prime Number Theorem. In Theorem \ref{thm:theorem4} we show that the irrationality measure arguments yield $\pi(x) \ge o(x/\log x)$ for infinitely many $x$ (where $f(x) = o(g(x))$ means that $\lim_{x\to\infty} f(x)/g(x) = 0$), which shows (as one would expect) that $x/\log x$ is a natural boundary for these methods.


\section{Lower bounds for $\pi(x)$}\label{sec:lowerboundspix} \setcounter{equation}{0}

Define $T(x,k)$ by $T(x,k) = x^{\wedge}(x^{\wedge}(x^{\wedge}(\cdots^{\wedge}x)\cdots))$, with $x$ occurring $k$ times.\\

\begin{thm}\label{thm:theorem1} As $\mu_{\rm irr}(\pi^2/6) < 5.45$, there exists an $N_0$ so that, for all $k$ sufficiently large, \be \pi(T(N_0,2k))\ \ge\ k.\ee
\end{thm}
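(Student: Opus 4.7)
My plan is to recast the theorem as the iteration of a \emph{Bertrand-type} bound on the next prime. Specifically, fix any $\mu \in (5.45, 6)$; the core step (the Key Lemma) is the claim that there exist an absolute constant $C$ and a threshold $n_\ast$ such that, for every $n \ge n_\ast$, the interval $(n, C n^{2\mu\pi(n)}]$ contains a prime. Granting this, I iterate: let $p_j$ denote the $j$-th prime and choose $j_\ast$ so that $p_{j_\ast} \ge n_\ast$. Applying the Key Lemma at $n = p_j$ (so $\pi(n) = j$) gives $p_{j+1} \le C p_j^{2\mu j}$ for $j \ge j_\ast$, and taking logarithms twice then unwinding the recursion yields $\log\log p_k \le 2k\log k$ for $k$ sufficiently large.

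To prove the Key Lemma I work with the partial Euler product $P(n) = \prod_{p \le n}(1-p^{-2})^{-1}$. Writing $P(n) = a/b$ in lowest terms, the denominator satisfies $b \le \prod_{p \le n}(p^2-1) \le n^{2\pi(n)}$. On the analytic side, letting $p^\ast$ denote the smallest prime exceeding $n$ (which exists since $\zeta(2)$ is irrational), the factorization $\zeta(2) - P(n) = P(n)\bigl(\prod_{p > n}(1-p^{-2})^{-1} - 1\bigr)$ combined with the estimate $\sum_{p > n} p^{-2} \le \sum_{m \ge p^\ast} m^{-2} \le 2/p^\ast$ yields $|\zeta(2) - P(n)| \le C'/p^\ast$ for some absolute constant $C'$. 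Meanwhile the hypothesis $\mu_{\rm irr}(\pi^2/6) < 5.45 < \mu$ guarantees $|\zeta(2) - a/b| \ge b^{-\mu}$ once $b$ exceeds some threshold $Q_0$, and $b \to \infty$ with $n$. For $n \ge n_\ast$ these two inequalities combine to force $p^\ast \le C' b^\mu \le C' n^{2\mu\pi(n)}$, which is the Key Lemma.

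Finally, I compare the double-exponential bound on $p_k$ with the tower $T(N_0, 2k)$. For any fixed $N_0 \ge 2$ one has $\log\log T(N_0, 2k) = T(N_0, 2k-2)\log N_0 + O(1)$, and since $T(N_0, 2k-2)$ is itself a tower of height $2k-2$, for $k$ large it vastly exceeds the polynomial $2k\log k$. Thus $p_k \le T(N_0, 2k)$ for all sufficiently large $k$, and hence $\pi(T(N_0, 2k)) \ge \pi(p_k) = k$, as desired. I expect the main obstacle to be the tail estimate $|\zeta(2) - P(n)| \le O(1/p^\ast)$: the naive integral bound $\int_n^\infty x^{-2}\,dx$ yields only $O(1/n)$ and retains no information about the next prime, so the integral comparison must be anchored at $p^\ast$ rather than $n$ and the logarithm of the tail Euler product must be unfolded carefully so the savings survives the passage from $\log P(n)$ back to $P(n)$; verifying that the irrationality-measure threshold $Q_0$ is eventually passed is where most of the bookkeeping lives.
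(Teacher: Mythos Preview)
Your proof is correct and rests on the same mechanism as the paper's --- the partial Euler product $P(n)$ as a rational approximation to $\zeta(2)$, a tail estimate controlled by the next prime, and the irrationality-measure lower bound --- but you execute it more sharply. The paper deliberately bounds the denominator by the wasteful $q_N \le N!^2$ and takes $f(N)=(N!)^{14}$, so that iterating $f$ lands directly in tower territory; you instead use $b \le n^{2\pi(n)}$ (precisely the bound the paper reserves for its later theorems) and iterate at the primes themselves to obtain $p_{j+1}\le C\,p_j^{2\mu j}$, whence $\log\log p_k = O(k\log k)$. That is already the strength of the paper's Theorem~2.3, so your argument for Theorem~2.1 is overkill: you establish a double-exponential bound on $p_k$ and then note that the tower $T(N_0,2k)$ dominates it for any fixed $N_0\ge 2$. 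The paper's version is cruder by design, to isolate the idea before introducing the self-improving refinement; your version shows the refinement is available from the outset, at the cost of a slightly more delicate recursion to unwind.
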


\begin{proof} For any integer $N$ let $p_N$ and $q_N$ be the relatively prime integers satisfying \be \frac{p_N}{q_N} \ = \ \prod_{p \le N} \left(1 - \frac1{p^2}\right)^{-1} \ = \ \prod_{p \le N} \left(1 + \frac{1}{p^2 - 1}\right). \ \ee \emph{Assume there are no primes $p \in (N,f(N)]$, where $f(x)$ is some rapidly growing function to be determined later.} If $f(N)$ is too large relative to $N$, we will find that $p_N/q_N$ is too good of a rational approximation to $\pi^2/6$, and thus there must be at least one prime between $N$ and $f(N)$. Under our assumption, we find \be \left|\frac{p_N}{q_N} - \frac{\pi^2}6\right| \ = \ \frac{p_N}{q_N} \left|1 - \prod_{p > f(N)} \left(1 + \frac{1}{p^2 - 1}\right)\right|. \ \ee Clearly $p_N/q_N \le \pi^2/6$, and \bea \prod_{p > f(N)} \left(1 + \frac{1}{p^2 - 1}\right) & \ = \ & \exp\left(\log \prod_{p > f(N)} \left(1 + \frac{1}{p^2 - 1}\right) \right) \nonumber\\ &\le & \exp\left( \sum_{n > f(N)} \log\left(1 + \frac1{(n-1)^2}\right) \right) \\ & \le & \exp\left( \sum_{n > f(N)} \frac1{(n-1)^2}\right) \ \le \ \exp\left( \frac{1}{f(N)^2} + \frac{1}{f(N)}\right) \ \eea (the last inequality follows by the replacing the sum over $n \ge f(N)+2$ with an integral). Standard properties of the exponential function yield \be \left|\frac{p_N}{q_N} - \frac{\pi^2}6\right| \ \le \ \frac{\pi^2}{6} \left|1 - \exp\left( \frac{1}{f(N)^2} + \frac{1}{f(N)}\right)\right| \ \le \ \frac{10}{f(N)}. \ \ee The largest $q_N$ can be is $N!^2$, which happens only if all integers at most $N$ are prime. Obviously we can greatly reduce this bound, as the only even prime is $2$; however, our purpose is to highlight the method by using the most elementary arguments possible. If we take $f(x) = (x!)^{14}$, we find (for $N$ sufficiently large) that \be\label{eq:temppisquareddenom} \left| \frac{\pi^2}6 - \frac{p_N}{q_N}\right| \ \le \ \frac{10}{f(N)} \ < \ \frac{1}{q_N^6}; \ee however, this contradicts Rhin and Viola's bound on the irrationality measure of $\pi^2/6$ ($\mu_{\rm irr}(\pi^2/6)$ $<$ $5.45$). Thus there must be a prime between $N$ and $f(N)$. Note $f(N) \le N^{14N} \le (14N)^{14N}$ for large $N$. Letting $f^{(k)}(N)$ denote the result of applying $f$ a total of $k$ times to $N$, for $N_0$ sufficiently large we see for large $k$ that there are at least $k$ primes at most $T(14 N_0,2k)$.
\end{proof}

The inverse of the function $T(N,-)$ is called the $\log^*$ function to base $N$. It is the number of times one can iterate the logarithm without the number becoming non-positive and leaving the domain of the logarithm. It is this extremely slowly growing function that the above theorem yields as a lower bound for $\pi(x)$. The base was determined by the irrationality bound and unspecified (but constructive) bound on the size of the finite number of approximations violating the irrationality bound.


Of course, this bound arises from assuming that all the numbers at most $x$ are prime (as well as some weak estimation); however, if all the numbers at most $x$ are prime then there are a lot of primes, and we do not need to search for a prime between $N$ and $f(N)$! This interplay suggests that a more careful argument should yield a significantly better estimate on $\pi(x)$, if not for all $x$ then at least infinitely often. We will use an upper bound on $\pi(x)$ with the inequality $q_N\le \prod_{p\le N}(p^2-1)\le N^{2\pi(N)}$. While isolating the true order of magnitude of our bound is difficult, we can easily prove the following.

\begin{thm}\label{thm:theorem2} The finiteness of the irrationality measure of $\pi^2/6$ implies the existence of an $M > 0$ such that for infinitely many $x$ we have $\pi(x) \ge \log\log\log(x) - M$.
\end{thm}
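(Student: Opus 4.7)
The plan is to rerun the argument of Theorem~\ref{thm:theorem1} but with the improvement flagged in the paragraph preceding the statement: replace the crude bound $q_N \leq (N!)^2$ by $q_N \leq \prod_{p \leq N}(p^2-1) \leq N^{2\pi(N)}$. Fix any $\nu$ with $\mu_{\rm irr}(\pi^2/6) < \nu < \infty$; by definition of the irrationality measure, there is a $Q_0$ such that $|\pi^2/6 - p/q| < 1/q^{\nu}$ fails for all $q \geq Q_0$. Exactly as in the proof of Theorem~\ref{thm:theorem1}, assuming that no prime lies in $(N, f(N)]$ yields $|\pi^2/6 - p_N/q_N| \leq 10/f(N)$. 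Taking
\[
f(N) \ := \ 11\, N^{2\nu\,\pi(N)}
\]
gives $10/f(N) < N^{-2\nu\,\pi(N)} \leq 1/q_N^{\nu}$, which contradicts the irrationality bound once $q_N \geq Q_0$, i.e., for all $N$ beyond some fixed threshold $N_0$. Hence for every $N \geq N_0$ there is at least one prime in $(N, f(N)]$.

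Starting from $x_0 = N_0$, inductively set $x_{j+1} := f(x_j)$ and write $P_j := \pi(x_j)$. Then $P_{j+1} \geq P_j + 1$, so $P_j \geq P_0 + j$, and
\[
\log x_{j+1} \ \leq \ 2\nu P_j \log x_j + \log 11.
\]
Since $x_j \to \infty$ as $j \to \infty$, it suffices to exhibit an absolute constant $M$ such that $P_j \geq \log\log\log x_j - M$ for every $j$. I would establish this by induction on $j$ in the stronger form $\log x_j \leq \exp(\exp(P_j + M))$. The inductive step, after substituting $u := e^{P_j+M}$ (so that $P_{j+1} \geq P_j+1$ gives $e^{P_{j+1}+M} \geq e\,u$), reduces to $2\nu P_j\, e^u + \log 11 \leq e^{eu}$; dividing by $e^u$, this in turn follows once $(e-1)\,e^{P_j+M} \geq \log(2\nu P_j + 1) + O(1)$. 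The left side is exponential in $P_j$ while the right is only logarithmic, so it holds for all $P_j \geq 1$ as soon as it holds at $P_j = 1$, which we arrange by taking $M$ large enough. The base case $\log x_0 \leq \exp\exp(P_0 + M)$ is trivial after possibly enlarging $M$, and $P_0 \geq 1$ is automatic since $N_0 \geq 2$.

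The main obstacle is selecting an invariant strong enough for the induction to close: the recursion entangles the quantity we want to lower-bound, $\pi(x_j)$, with the growth factor for $x_j$, so we need a doubly-exponential invariant that absorbs the multiplicative blowup $2\nu\pi(x_j)$ at each step. The additive gain $P_{j+1} \geq P_j + 1$ precisely pays for a single extra $e$ in the exponential tower per iteration, which is why $\log\log\log x$ (rather than anything smaller) emerges as the natural lower bound produced by this method.
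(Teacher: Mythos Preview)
Your argument is correct and complete: the recursion $\log x_{j+1}=2\nu P_j\log x_j+\log 11$ together with $P_{j+1}\ge P_j+1$ really does propagate the invariant $\log x_j\le\exp(\exp(P_j+M))$, and the inequality $(e-1)e^{P_j+M}\ge\log(2\nu P_j+1)$ needed at each step is exactly the exponential-versus-logarithm comparison you isolate.

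The route is genuinely different from the paper's. The paper begins with a dichotomy: either $\pi(x)\ge(\log x)^{e-1}/4b$ infinitely often (far stronger than the claim, so done), or $\pi(x)<(\log x)^{e-1}/4b$ for all large $x$. Under the second alternative the bound $q_N^b\le N^{2b\pi(N)}<\exp((\log N)^e)/10$ lets one take the \emph{deterministic} iterator $f(N)=\exp((\log N)^e)$, which solves explicitly to $x_n=\exp(\exp(e^n))$ and gives $\pi(x_n)\ge n-M=\log\log\log x_n-M$ with no further work. Your approach instead keeps $f(N)=11\,N^{2\nu\pi(N)}$ adaptive to the actual value of $\pi(N)$ and avoids any case split, at the price of an iteration that couples $x_j$ and $P_j$ and therefore requires the doubly-exponential invariant you identified. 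The paper's version buys an explicit, self-contained sequence $x_n$ and a one-line endgame; yours buys a single unified argument with no artificial threshold $(\log x)^{e-1}/4b$, and makes transparent why one ``$e$'' in the tower is exactly what $P_{j+1}\ge P_j+1$ can purchase.
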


\begin{proof} We choose our constants below to simplify the exposition, and not to obtain the sharpest results. Let $b$ be a bound on the irrationality measure of $\pi^2/6$. The theorem trivially follows if $\pi(x) \ge (\log x)^{e-1} / 4b$ infinitely often, so we may assume that $\pi(x) < (\log x)^{e-1}/4b$ for all $x$ sufficiently large. Thus the denominator $q_N$ in our rational approximation in equation \eqref{eq:temppisquareddenom}, when we consider primes at most $N$ for $N$ sufficiently large, has the bound \be q_N^b\ \le\ N^{2b\pi(N)} \ =\ \exp(2b\pi(N) \log N) \ < \ \exp\left(\frac{(\log N)^e}2\right)\ \le\ \frac{\exp\left(\log (N)^e\right)}{10}.  \ee Thus, if $f(N)=\exp(\log(N)^e)$, we have checked the right-hand inequality of equation \eqref{eq:temppisquareddenom}, which in this case is that $10/f(N) < 1/q_N^b < 10/\exp(\log N)^e$. This cannot hold for $N$ sufficiently large without violating Rhin and Viola's bound on the irrationality measure, unless of course there is a prime between $N$ and $f(N)$. Thus there must be a prime between $N$ and $f(N)$ for all $N$ large.  Define $x_n$ by $x_0=e^e$ and iterating by applying $f$, so that $x_{n+1}=f(x_n)=\exp((\log x_n)^e)$. Then $\log x_{n+1}= (\log x_n)^e$, so $\log x_n = (\log x_0)^{e^n}=\exp e^n$ or $x_n=\exp(\exp e^n)$. Once $x_{M}$ is sufficiently large so that the above argument applies, there is a prime between every pair of $x_i$, so there are at least $n-M$ primes less than $x_n$.
\end{proof}

The simple argument above illustrates how our result can improve itself (at least for an increasing sequence of $x$'s). Namely, the lower bound we obtain is better the fewer primes there are, and if there are many primes we can afford to wait awhile before finding another prime. By more involved arguments, one can show that $\pi(x) \ge h(x)$ infinitely often for many choices of $h(x)$. Sadly, however, none of these arguments allow us to take $h(x) = \log \log x$. Our attempts at obtaining such a weak bound gave us a new appreciation of the estimate from Euclid's argument! Our best result along these lines is the following.

\begin{thm}\label{thm:theorem3} The finiteness of the irrationality measure of $\pi^2/6$ implies the existence of an $M > 0$ such that for infinitely many $x$ we have $\pi(x)\ge \frac{\log\log x}{2\log\log\log x} - M$.
\end{thm}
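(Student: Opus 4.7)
The plan is to mimic the argument of Theorem~\ref{thm:theorem2} but with a more aggressive choice of the gap function $f$. Let $b$ be a finite bound on $\mu_{\rm irr}(\pi^2/6)$. If $\pi(x) \ge \frac{\log\log x}{2\log\log\log x}$ infinitely often, the theorem follows trivially, so we may assume $\pi(x) < \frac{\log\log x}{2\log\log\log x}$ for every sufficiently large $x$. Combined with the estimate $q_N \le \prod_{p\le N}(p^2-1) \le N^{2\pi(N)}$, this yields
\be
\log q_N^b \ \le \ 2b\,\pi(N)\log N \ < \ \frac{b \log N \cdot \log\log N}{\log\log\log N}.
\ee

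I would then set $f(N) := \exp\!\left(\frac{2b\log N \cdot \log\log N}{\log\log\log N}\right)$, which for $N$ large forces $q_N^b < f(N)/10$. If there were no prime in $(N, f(N)]$, the argument leading to \eqref{eq:temppisquareddenom} would again produce a rational approximation $p_N/q_N$ to $\pi^2/6$ satisfying $|p_N/q_N - \pi^2/6| \le 10/f(N) < 1/q_N^b$, contradicting the Rhin--Viola bound. Hence a prime must lie in $(N, f(N)]$ for every sufficiently large $N$.

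Iterate: fix $x_0$ large enough that the previous step applies, and set $x_{n+1} := f(x_n)$. Then $\pi(x_n) \ge n - O(1)$. To convert this into the claimed bound, put $\ell_n := \log\log x_n$ and $m_n := \log \ell_n$; the definition of $f$ gives
\be
\ell_{n+1} \ = \ \log(2b) + \ell_n + m_n - \log m_n \ = \ \ell_n + (1+o(1))\log \ell_n.
\ee
Comparing with the ODE $u'(t) = \log u(t)$, for which $t \sim u/\log u$, yields $\ell_n \sim n\log n$; hence $m_n \sim \log n$ and $n \sim \ell_n/m_n = \log\log x_n/\log\log\log x_n$, so $\pi(x_n) \ge n - O(1) \ge \frac{\log\log x_n}{2\log\log\log x_n} - M$ for a suitable $M$ and all $n$ large.

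The main obstacle is making the heuristic $\ell_n \sim n\log n$ rigorous. This is a straightforward induction: once $n$ is large one has $\ell_{n+1} - \ell_n \ge (1 - \epsilon/2)\log \ell_n$, and summing against the comparison $\sum_{k=n_0}^{n-1} 1/\log \ell_k \approx \ell_n/\log\ell_n$ gives $\ell_n \ge (1-\epsilon) n\log n$. The factor of $2$ in the denominator of the theorem is generous slack: the argument actually produces $\pi(x_n) \ge (1-o(1))\log\log x_n/\log\log\log x_n$ along this sequence, and the constant $2b$ in the definition of $f$ may be replaced by any constant strictly greater than $b$ without affecting the conclusion.
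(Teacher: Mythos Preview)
Your approach is essentially the paper's, with one cosmetic difference and one small slip. The paper does not assume the tightest possible failure $\pi(x)<\frac{\log\log x}{2\log\log\log x}$; instead it assumes the \emph{stronger} failure $\pi(x)\le(\log\log x)/4b$ (which still suffices, since $(\log\log x)/4b$ eventually dominates $\frac{\log\log x}{2\log\log\log x}$). This choice buys a cleaner gap function $f(N)=\exp(\log N\cdot\log\log N)$, so that the double-log recursion becomes exactly $\ell_{n+1}=\ell_n+\log\ell_n$ with no $o(1)$ terms, and a one-line induction gives $\ell_n\le 2n\log n$. That is where the constant $2$ in the statement comes from.

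The slip: in your final paragraph you sketch a \emph{lower} bound $\ell_n\ge(1-\epsilon)n\log n$, but what you actually need is the \emph{upper} bound $\ell_n\le(1+o(1))n\log n$, since you must solve $\pi(x_n)\ge n-O(1)$ for $n$ in terms of $x_n$. Both directions follow by the same comparison, so this does not invalidate the argument; just be sure to prove the right inequality.
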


\begin{proof} The proof is similar to that in Theorem \ref{thm:theorem2}. As before, let $b$ be a bound on the irrationality measure of $\pi^2/6$. We assume that $\pi(x)\le (\log\log x) / 4b$ for all sufficiently large $x$, as otherwise the claim trivially follows. We show that there is a prime between $x_n$ and $x_{n+1}$, where $x_n=\exp(\exp a_n)$ and the sequence $a_n$ is defined by $a_{n+1}=a_n+\log a_n$. It is easy to show that $a_n$ grows like $n\log n$; from there the growth of $x_n$ proves the theorem.  Consider $h(x) = \log\log\log x / \log\log x$. Note $\log^{h(x)} x = \log\log x$, so our assumption can be rewritten as $\pi(x)\le (\log^{h(x)} x)/4b$ for large $x$. Therefore, if $N$ is sufficiently large we have the bound \be q_N^b\le N^{2b\pi(N)}\ =\ \exp(2b\pi(N)\log N)\ \le\ \exp\left(\frac{\log^{h(N)+1} N}2\right) \ \le\ \frac{\exp\left(\log^{h(N)+1} N\right)}{10}. \ee Setting $f(N)=\exp(\log^{h(N)+1}N)$, we see that for large $N$ there must be a prime between $N$ and $f(N)$. We define $x_n$ by iterating $f$ (so $x_{n+1} = f(x_n)$), starting at $x_2=\exp(\exp(e))$.  The recursion can be rewritten as $\log \log x_{n+1}=(h(x_n)+1)\log\log x_n$. In terms of $a_n=\log \log x_n$, this is $a_{n+1}=\left(\frac{\log a_n}{a_n}+1\right)a_n = a_n+\log a_n$. For an upper bound, we have $a_n\le 2n\log n$. We prove this by induction. For the base case, $a_2=e<4\log 2$. If $a_n \le 2n\log n$ with $n \ge 2$, then \be a_{n+1} \ \le \ 2n\log n + \log(2n\log n) \ < \ (2n+1) \log n + \log n \ < \ (2n+2)\log(n+1).  \ \ee For a lower bound, note that $\log a_k\ge1$ so $a_n\ge n$. This improves to $a_{n+1}-a_n=\log a_n \ge \log n$. Therefore $a_{n+1}\ge \sum_{k=1}^n \log k>\int_1^n \log x\,dx =n\log n-n+1$. Thus $n\log n - n < a_n \le 2n\log n$. Therefore $\pi(x_n)\ge n-M$, where $x_M$ is large enough that the assumed bound on $\pi(x_M)$ applies. To derive our asymptotic conclusions, we need to know the inverse of the sequence $x_n$. For $n$ large there are at least $n-M$ primes that are at most $x_n = \exp(\exp a_n) \le \exp(\exp(2n\log n)$. Letting $x = \exp(\exp(2n \log n)$, we find n is at least $\log\log x / 2\log\log\log x$. Therefore, for infinitely many $x$ we have $\pi(x) \ge \log\log x / 2\log\log\log x - M$ (where we subtract $M$ for the same reasons as in Theorem \ref{thm:theorem2}).
\end{proof}

\begin{rek} The lower bound from Theorem \ref{thm:theorem3} is slightly weaker than the one from Euclid's argument, namely that $\pi(x)\ge\log_2\log_2 x$. It is possible to obtain slightly better results by assuming instead that $\pi(x) \le (\log\log x)^{c(x)}$ $/$ $b$; a good choice is to take $c(x) = \log g(x)$ $/$ $\log(g(x) \log g(x))$ with $g(x) = \log \log x / \log \log \log x$. The sequence $a_{n+1} = a_n + \log a_n$ which arises in our proof is interesting, as the Prime Number Theorem states the leading term in the average spacing between primes of size $x$ for large $x$ is $\log x$! Thus $a_n$ is approximately the $n$\textsuperscript{th} prime $p_n$; for example, $a_{1000000} \sim 15479041$ and $p_{1000000} = 15485863$, which differ by about $.044\%$.\end{rek}


\section{Bounds for the irrationality measure of $\pi^2/6$}\label{sec:boundsirrmeasure} \setcounter{equation}{0}

We briefly describe how to modify standard arguments on the irrationality measure of $\zeta(2) = \pi^2/6$ to make Theorems \ref{thm:theorem2} and \ref{thm:theorem3} unconditional. As always, we merely highlight the ideas and do not attempt to prove optimal results. We follow the argument in \cite{RV1}, and by $A(x) = o(B(x))$ we mean $\lim_{x\to \infty} A(x)/B(x) = 0$. With $d_n = {\rm lcm}(1,\dots,n)$, they show the existence of sequences $\{a_n\}$, $\{b_n\}$ such that \be a_n - b_n \zeta(2) \ = \ d_n^2 \int_0^1 \int_0^1 \frac{H_n(x+y,xy)dxdy}{(1-xy)^{n+1}} \ = \ d_n^2 I_n \ \ee for a sequence of polynomials $H_n(u,v)$ with integer coefficients, with $\rho, \sigma > 0$ such that \ \\

\begin{quote}
\begin{quote}\large
\begin{itemize}

\item[(RV1)] $\limsup_{n\to\infty} \frac{\log |b_n|}{n} \le \rho$, and \\ \

\item[(RV2)] $\lim_{n\to\infty} \frac{\log|a_n-b_n\zeta(2)|}{n} = -\sigma$.

\end{itemize}
\end{quote}
\end{quote}\normalsize

\ \\

Then $\mu_{\rm irr}(\zeta(2)) \le 1+\frac{\rho}{\sigma}$ (this is their Lemma 4, and is a special case of Lemma 3.5 in \cite{C}). Unfortunately (for us), they use the Prime Number Theorem to prove that $d_n = \exp(n + o(n))$. From this they deduce that there exist constants $a$ and $b$ such that for any $\epsilon>0$, (i) $\exp((a+2-\epsilon)n)$ $\le d_n^2 I_n$ $\le$ $\exp((a+2+\epsilon)n)$ and (ii) $|b_n|$ $\le$ $\exp((b+2+\epsilon)n)$. Note (i) and (ii) imply (RV1) and (RV2) for our sequences $\{a_n\}$ and $\{b_n\}$ with $\rho = b+2$ and $\sigma = 2-a$, which gives $\mu_{\rm irr}(\zeta(2)) \le (a-b)/(a+2)$. It is very important that the upper and lower bounds of $d_n$ are close, as the limit in (RV2) needs to exist.  We now show how to make Theorems \ref{thm:theorem2} and \ref{thm:theorem3} independent of the Prime Number Theorem (i.e., we do not assume the irrationality measure of $\zeta(2)$ is finite, as the published proofs we know use the Prime Number Theorem). \emph{Assume $\pi(x) \le \log x$ for all $x$ sufficiently large}; if not, then $\pi(x) > \log x$ infinitely often and Theorems \ref{thm:theorem2} and \ref{thm:theorem3} are thus trivial. Under this assumption, we have $1 \le d_n \le \exp(\log^2 n)$. The lower bound is clear. For the upper bound, note the largest power of a prime $p \le n$ that is needed is $\lfloor \log_p n\rfloor \le \log n / \log p$. Thus \be\label{eq:dn} d_n \ \le \ \prod_{p\le n} p^{\log n/\log p} \ = \ \exp\left(\sum_{p \le n} \frac{\log n}{\log p} \cdot \log p\right) \ = \ \exp(\pi(n)\log n); \ee the claimed upper bound follows from our assumption that $\pi(x) \le \log x$. We now find for any $\epsilon > 0$ that (i') $\exp((a-\epsilon)n)$ $\le$ $d_n^2 I_n$ $\le$ $\exp((a+\epsilon)n+2\log^2n)$ and (ii') $|b_n| \le \exp((b+\epsilon)n+2\log^2n)$. We again find that (RV1) and (RV2) hold, and $\mu_{\rm irr}(\zeta(2)) \le (a-b)/a$.

Using the values of $a$ and $b$ from their paper, we obtain (under the assumption that $\pi(x) \le \log x$) that $\mu_{\rm irr}(\zeta(2))$ is finite. Thus Theorems \ref{thm:theorem2} and \ref{thm:theorem3} are independent of the Prime Number Theorem.  Using the values of $a$ and $b$ in \cite{RV1}, we can prove that $\pi(x)$ is quite large infinitely often.

\begin{thm}\label{thm:theorem4} Let $g(x)$ be any function satisfying $g(x) = o(x/\log x)$. Then infinitely often $\pi(x) \ge g(x)$. In particular, for any $\epsilon > 0$ we have $\pi(x) \ge x/\log^{1+\epsilon} x$ infinitely often.
\end{thm}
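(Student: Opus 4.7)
The plan is to argue by contradiction. Suppose $\pi(x) < g(x)$ for every $x$ sufficiently large; I will show that this forces $\mu_{\rm irr}(\zeta(2))$ strictly below $2$, contradicting Dirichlet's theorem (which gives $\mu_{\rm irr}(\alpha) \ge 2$ for every irrational $\alpha$).

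First I would use the assumption together with the inequality $d_n \le \exp(\pi(n)\log n)$ from \eqref{eq:dn} to conclude
\[
d_n \le \exp(g(n)\log n) = e^{o(n)},
\]
since $g(x) = o(x/\log x)$ gives $g(n)\log n = o(n)$. Feeding this into the Rhin--Viola setup exactly as in the preceding paragraph, but with $g(n)\log n$ in place of $\log^2 n$, the estimates (i$'$) and (ii$'$) become
\[
\exp((a-\epsilon)n) \le d_n^2 I_n \le \exp((a+\epsilon)n + 2g(n)\log n), \qquad |b_n| \le \exp((b+\epsilon)n + 2g(n)\log n),
\]
and since $g(n)\log n = o(n)$ the correction terms are negligible on the exponential scale. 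Thus (RV1) and (RV2) hold with $\rho = b$ and $\sigma = -a$, so the Chudnovsky-type lemma yields
\[
\mu_{\rm irr}(\zeta(2)) \le 1 + \frac{b}{-a}.
\]
The key structural point is that replacing $d_n \approx e^n$ (the PNT input) by the much smaller $d_n = e^{o(n)}$ strips off both of the $+2$ offsets that appear in the standard Rhin--Viola bound $1+(b+2)/(-(a+2))$, giving a bound on $\mu_{\rm irr}(\zeta(2))$ that is noticeably smaller than the PNT-driven $5.45$.

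The main obstacle is the numerical verification, for the specific constants $a$ and $b$ of \cite{RV1}, that $1 + b/(-a) < 2$, i.e.\ $b < -a$. This is precisely what the phrase "using the values of $a$ and $b$ in \cite{RV1}" in the preamble to Theorem \ref{thm:theorem4} refers to: the function $x/\log x$ is the natural boundary because it is exactly the rate at which the hypothesis $\pi(x) < g(x)$ shrinks $d_n$ just enough to drop the Rhin--Viola irrationality measure bound beneath the Dirichlet floor of $2$. Granting the numerical inequality, the contradiction is immediate, yielding $\pi(x) \ge g(x)$ for infinitely many $x$; the "in particular" statement follows by taking $g(x) = x/\log^{1+\epsilon} x$, which is $o(x/\log x)$.
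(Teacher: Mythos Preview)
Your proposal is correct and follows essentially the same route as the paper: assume $\pi(x)\le g(x)$ for all large $x$, use \eqref{eq:dn} to get $d_n\le e^{o(n)}$, deduce (RV1) and (RV2) with $\rho=b$ and $\sigma=-a$, and contradict the lower bound $\mu_{\rm irr}(\zeta(2))\ge 2$. The only thing the paper adds is the explicit numerical verification you flag as the ``main obstacle'': it quotes $a=-2.55306\ldots$ and $b=1.70036\ldots$ from \cite{RV1} (page 102), so that $1+b/(-a)=1.666\ldots<2$.
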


\begin{proof} We assume $\pi(x) \le g(x)$ for all $x$ sufficiently large, as otherwise the claim is trivial. In \cite{RV1} numerous admissible values of $a$ and $b$ are given (and the determination of these bounds does not use any estimates on the number of primes); we use $a=-2.55306095\ldots$ and $b=1.70036709\ldots$ (page 102). From \eqref{eq:dn} we have $1 \le d_n \le \exp(\pi(n)\log n)$. Using $\pi(x) \le g(x)$ we find (i'') $\exp((a-\epsilon)n)$ $\le$ $d_n^2 I_n$ $\le$ $\exp((a+\epsilon)n+2g(n)\log n)$ and (ii') $|b_n| \le \exp((b+\epsilon)n+2g(n)\log n)$. We again find (RV1) and (RV2) hold, with the same values of $a$ and $b$. For example, to see that (RV2) holds we need to show $\lim_{n\to\infty} (1/n)\log|a_n-b_n\zeta(2)|$ $=$ $-\sigma$. As $a_n - b_n \zeta(2) = d_n^2I_n$, we have for any $\epsilon > 0$ that \be \lim_{n\to\infty} \frac{(a-\epsilon)n}n \ \le \ \lim_{n\to\infty} \frac{\log|a_n-b_n\zeta(2)|}{n} \ \le \ \lim_{n\to\infty} \frac{(a+\epsilon)n+2g(n)\log n}n. \ \ee Our assumption on $g(x)$ implies that $\lim_{n\to\infty} \frac{g(n)\log n}n = 0$, and thus the limit exists as before. We find we may take $\rho = b$ and $\sigma = -a$, which yields $\mu_{\rm irr}(\zeta(2)) \le 1-\frac{b}{a} = 1.666\ldots < 2$. As the irrationality exponent of an irrational number is at least $2$ (see \cite{MT-B} for a proof of this and a proof of the irrationality of $\pi^2$), this is a contradiction. Thus $\pi(x)$ cannot be less than $g(x)$ for all $x$ sufficiently large (and thus infinitely often we beat Euclid by an enormous amount).
\end{proof}

\begin{rek}
We have proved the above in the case of $\pi(x) = o(x/\log x)$. Now, suppose we wanted to get $\pi(x) \sim cx/\log x$ for some $x$. Then, following the calculations above, we would have $b_n \le (b + \epsilon)n + 2g(n\log(n)) = (b + \epsilon)n + 2cn$, so then taking the limit sup as above gives $\rho = b + 2c$. However, if we attempt to take the limit for $\sigma$, we get $(a - \epsilon)n \le d^2 I_n \le \exp((a+\epsilon)n + 2cn)$, and then we can find $a \le \lim_{n\to\infty} (\log \left| a_n - b_n\zeta(2) \right|) / n \le a + 2c$. Notably, the upper and lower bounds are not equal, so we do not know if the limit exists; to show this we would need to have a non-trivial lower bound on $d_n$, which requires the Prime Number Theorem. However, if we had the limit equal to the upper bound, we would have $-\sigma = a + 2c$, and then the irrationality of $\pi^2$ implies $\mu_{\text{irr}}(\zeta(2)) \ge 1 - \frac{b + 2c}{a + 2c}$, which would give us that $c < 0.213$. So, this is possible to show if a lower bound for $d_n$ can be found independent of the Prime Number Theorem. \end{rek}

\begin{rek} It was essential that the limit in (RV2) exist in the above argument. If $\pi(x) \gg x/\log x$ infinitely often and $\pi(x) \ll x/\log^{1+\epsilon} x$ infinitely often then our limit might not exist and we cannot use Lemma 4 of \cite{RV1}. Kowalksi \cite{K} notes\footnote{His note incorrectly mixed up a negation, and the claimed bound of $\pi(x) \gg x^{1-\epsilon}$ is wrong.} that knowledge of $\zeta(s)$ as $s\to 1$ yields $\pi(x) \gg x/\log^{1+\epsilon} x$ infinitely often, which is significantly better than his proof using knowledge of $\zeta(2)$ and Linnik's theorem on the least prime in arithmetic progressions to get $\pi(x) \gg \log\log x$. We may interpret our arguments as correcting this imbalance, as now an analysis of $\zeta(2)$ gives a comparable order of magnitude estimate. It is interesting that the correct growth rate of $\pi(x)$, namely $x/\log x$, surfaces in these arguments as a natural boundary!
\end{rek}

We conclude by improving Theorem \ref{thm:theorem4} to show that not only are we infinitely often close to the true order of growth, but when we are close we are close for large stretches of integers. For notational simplicity we work with logarithms below, but one can easily modify the argument to $o(x/\log x)$.

\begin{cor}\label{thm:theorem5} For any $\wepsilon > 0$ there exists an increasing sequence of numbers $X_{n,\wepsilon}$ tending to infinity such that for each $n$, $\pi(x) \ge x/\log^{1+\wepsilon} x$ for almost all $x \le X_{n,\wepsilon}$.
\end{cor}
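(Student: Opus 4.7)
The plan is to apply Theorem~\ref{thm:theorem4} with a strictly smaller exponent $\widetilde\epsilon'\in(0,\widetilde\epsilon)$ to produce special points at which $\pi$ beats the target with substantial slack, and then to propagate that slack to a long stretch of neighboring integers by exploiting nothing more than the monotonicity of $\pi$.

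Concretely, fix any $\widetilde\epsilon'\in(0,\widetilde\epsilon)$. Since $x/\log^{1+\widetilde\epsilon'}x = o(x/\log x)$, Theorem~\ref{thm:theorem4} supplies an increasing sequence $y_n\to\infty$ with $\pi(y_n)\ge y_n/\log^{1+\widetilde\epsilon'} y_n$. Set $c_n := \log^{\widetilde\epsilon-\widetilde\epsilon'}(y_n)$, which tends to infinity, and define $X_{n,\widetilde\epsilon} := c_n y_n$; this is increasing and diverges to infinity. Note the algebraic identity $y_n/\log^{1+\widetilde\epsilon'}(y_n) = c_n\cdot y_n/\log^{1+\widetilde\epsilon}(y_n) = c_n f(y_n)$, where I write $f(t):=t/\log^{1+\widetilde\epsilon} t$ (eventually increasing).

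The key step is to show that for every integer $x\in[y_n, X_{n,\widetilde\epsilon}]$ (with $n$ large) one has $\pi(x)\ge x/\log^{1+\widetilde\epsilon}x$. By monotonicity of $\pi$ and the identity just noted, $\pi(x)\ge\pi(y_n)\ge c_n f(y_n)$, so it suffices to verify $f(x)\le c_n f(y_n)$ throughout $[y_n, X_{n,\widetilde\epsilon}]$. Since $f$ is increasing for large $t$, this reduces to the single endpoint check
\[
\frac{f(X_{n,\widetilde\epsilon})}{c_n f(y_n)} = \left(\frac{\log y_n}{\log X_{n,\widetilde\epsilon}}\right)^{1+\widetilde\epsilon}< 1,
\]
which holds because $\log X_{n,\widetilde\epsilon} = \log y_n + (\widetilde\epsilon-\widetilde\epsilon')\log\log y_n > \log y_n$. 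Consequently the exceptional set $\{x\le X_{n,\widetilde\epsilon}:\pi(x)<x/\log^{1+\widetilde\epsilon}x\}$ is contained in $[1,y_n)$, hence has density at most $y_n/X_{n,\widetilde\epsilon}=1/c_n\to 0$ as $n\to\infty$. This gives the desired conclusion that almost all $x\le X_{n,\widetilde\epsilon}$ are ``good.''

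No serious obstacle arises; the only place demanding any care is the endpoint inequality, and the positive gap $\widetilde\epsilon-\widetilde\epsilon'$ provides exactly the margin required to absorb the slowly varying factor $(\log y_n/\log X_{n,\widetilde\epsilon})^{1+\widetilde\epsilon}=1-o(1)$. Theorem~\ref{thm:theorem4} supplies all the analytic content, and the corollary is essentially a one-line monotonicity extension of it.
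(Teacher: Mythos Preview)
Your proof is correct and follows essentially the same approach as the paper's own argument: apply Theorem~\ref{thm:theorem4} with a smaller exponent (the paper takes $\widetilde\epsilon'=\widetilde\epsilon/2$, you allow any $\widetilde\epsilon'\in(0,\widetilde\epsilon)$), set $X_{n,\widetilde\epsilon}=y_n\log^{\widetilde\epsilon-\widetilde\epsilon'}y_n$, and use monotonicity of $\pi$ together with $\log y_n\le\log x$ to propagate the bound to all $x\in[y_n,X_{n,\widetilde\epsilon}]$, which already constitutes almost all of $[1,X_{n,\widetilde\epsilon}]$. The only cosmetic difference is your packaging of the endpoint check via the ratio $f(X_{n,\widetilde\epsilon})/(c_nf(y_n))$, which is algebraically identical to the paper's chain $Y/\log^{1+\widetilde\epsilon/2}Y\ge x/\log^{1+\widetilde\epsilon}Y\ge x/\log^{1+\widetilde\epsilon}x$.
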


\begin{proof} Let $Y_{n,\wepsilon}$ be an increasing sequence tending to infinity so that the result of Theorem \ref{thm:theorem4} holds with exponent $\epsilon = \wepsilon/2$; thus $\pi(Y_{n,\wepsilon}) > Y_{n,\wepsilon}/\log^{1+\wepsilon/2} Y_{n,\wepsilon}$.

Let $X_{n,\wepsilon} = Y_{n,\wepsilon} \log^{\wepsilon/2} Y_{n,\wepsilon}$; we show the claim in the theorem holds for almost all $x \le X_{n,\epsilon}$. We may assume $Y_{n,\wepsilon} \le x \le X_{n,\wepsilon}$, as the fraction of numbers less than $X_{n,\wepsilon}$ which are also less than $Y_{n,\wepsilon}$ tends to zero (the percentage is just $1/\log^{\wepsilon/2} Y_{n,\wepsilon}$).

The claim follows by showing $\pi(x)> x / \log^{1+\wepsilon} x$ for such $x$. We use the fact that $\pi(x)$ is non-decreasing, and by definition of $Y_{n,\wepsilon}$ there are at least $Y_{n,\wepsilon} / \log^{1+\wepsilon/2} Y_{n,\wepsilon}$ primes at most $X_{n,\wepsilon}$. The worst case for us would be that these are all the primes up to $X_{n,\wepsilon}$, but as the number of primes at most $x$ is non-decreasing we have $\pi(x) \ge Y_{n,\wepsilon}/\log^{1+\wepsilon/2} Y_{n,\wepsilon}$ for all $x$ under consideration; we now need to rewrite this in terms of $x$. We have
\begin{eqnarray} \pi(x) \ \ge \ 
\frac{Y_{n,\wepsilon}}{\log^{1+\wepsilon/2} Y_{n,\wepsilon}}  \ \ge \  \frac{x \log^{-\wepsilon/2} Y_{n,\wepsilon}}{\log^{1+\wepsilon/2} Y_{n,\wepsilon}} \ = \ \frac{x}{\log^{1+\wepsilon} Y_{n,\wepsilon}} \ \ge \ \frac{x}{\log^{1+\wepsilon} x};
\end{eqnarray}
thus for $x$ in the desired range we have $\pi(x) \ge x/\log^{1+\wepsilon} x$, completing the proof.
\end{proof}


\ \\

\end{document}